\providecommand{\U}[1]{\protect \rule{.1in}{.1in}}
\newtheorem{theorem}{Theorem}[section]
\newtheorem{definition}[theorem]{Definition}
\newtheorem{lemma}[theorem]{Lemma}
\newtheorem{proposition}[theorem]{Proposition}
\newtheorem{remark}[theorem]{Remark}
\newenvironment{proof}[1][Proof]{\noindent \textbf{#1.} }{\  $\Box$}
\numberwithin{equation}{section}
\begin{document}

\title{\textbf{Stein's Method for Law of Large Numbers under Sublinear  Expectations  }}
\author{ Yongsheng Song\thanks{RCSDS, Academy of Mathematics and Systems Science, Chinese Academy of Sciences, Beijing 100190, China, and
School of Mathematical Sciences, University of Chinese Academy of Sciences, Beijing 100049, China. E-mail:
yssong@amss.ac.cn.}  }

\date{\today}
\maketitle

\begin{abstract}  Peng, S. (\cite{P08b}) proved the law of large numbers under a sublinear expectation. In this paper, we give its error estimates  by Stein's method.
\end{abstract}

\textbf{Key words}: Stein's method; rate of convergence; law of large numbers

\textbf{MSC-classification}: 60F05; 60G50

\section{Introduction}
Peng, S. (\cite{P08b}) proved the law of large numbers (LLN) under a sublinear expectation:

 Let $(X_k)_{k\ge 1}$ be a sequence of independent and identically distributed random variables  under a sublinear expectation $\hat{\mathbf{E}}$ with $\hat{\mathbf{E}}[|X_1|^{2}]<\infty$. Set $S_n=\frac{X_1+\cdots+X_n}{n}$, $\underline{\mu}=-\hat{\mathbf{E}}[-X_1]$, $\overline{\mu}=\hat{\mathbf{E}}[X_1]$. 
 \begin {eqnarray}
\lim_{n} \hat{\mathbf{E}}[\phi(S_n)]=\sup_{y\in[\underline{\mu}, \overline{\mu}]}\phi(y),
\end {eqnarray} for any $\phi\in C_{b, Lip}(\mathbb{R})$, the collection of bounded and Lipschitz continuous functions on $\mathbb{R}$. The sublinear expectation $\mathcal{M}_p[\phi]:=\sup_{y\in[\underline{\mu}, \overline{\mu}]}\phi(y)$ is called a maximal distribution.

In this paper, we give the following error estimate for Peng's law of large numbers by Stein's method:
\begin {eqnarray}
\sup_{|\phi|_{Lip}\le1}\bigg| \hat{\mathbf{E}}[\phi(S_n)]-\sup_{y\in[\underline{\mu}, \overline{\mu}]}\phi(y)\bigg|\le C n^{-1/2},
\end {eqnarray} where $C$ is a constant depending only on $\hat{\mathbf{E}}[|X_1|^2]$.

For the linear case,  Stein's method, which made its first appearance in the ground
breaking work of Stein (1972), is a powerful tool to estimate  the error of normal approximation. In Song (2017) (\cite {Song17}), Stein's method under sublinear expectations was established to give error estimates for Peng's central limit theorem under a sublinear expectation.

In this paper,  we shall establish Stein's method for LLN under sublinear expectations. 

Let $\mathcal{N}[\phi]=\sup_{\mu\in\Theta}\mu[\phi]$ be a  sublinear expectation on $C_{b,Lip}(\mathbb{R})$  with $\underline{\mu}=-\mathcal{N}[-x]$, $\overline{\mu}=\mathcal{N}[x]$ and $\mathcal{N}[|x|^{1+\alpha}]<\infty$ for some $\alpha \in (0,1] $. For $\phi\in C_{b,Lip}(\mathbb{R})$, define $v(x,t):=\sup_{y\in[\underline{\mu}, \overline{\mu}]}\phi(x+ty).$  Then $v$ is the viscosity solution to the equation below:

\begin {eqnarray} \label {pE-intro}
\begin {split}
\partial_tv(x,t)-p(\partial_xv(x,t)) & = 0, \ (x,t)\in \mathbb{R}\times (0,\infty),\\
v(x,0) & =\phi(x),
\end {split}
\end {eqnarray}
where $p(a)=\sup_{y\in[\underline{\mu},\overline{\mu}]}(y a)$, $a\in\mathbb{R}$. Set $\phi_s(x):=v((1-s)x,s)$ and $w(s):=\mathcal{N}[\phi_s]$. Then $w(1)=v(0,1)=\sup_{y\in[\underline{\mu}, \overline{\mu}]}\phi(y)$, and $w(0)=\mathcal{N}[\phi]$.

SUPPOSE that $v$ belongs to $C_b^{1,\alpha}(\mathbb{R}\times\mathbb{R}_+)$. It can be shown that, for a.e. $s\in(0,1)$,
\begin {eqnarray}\label {intro-derivative}
w^{\prime}(s)=\frac{1}{1-s}\mu_s[p(\phi'_s(x))-x\phi'_s(x)],
\end {eqnarray} where  $\mu_s\in\Theta$ with $\mu_s[\phi_s]=\mathcal{N}[\phi_s]$.  From this, we get a substitute of the Stein equation.

\textbf{Step 1.} $\sup_{y\in[\underline{\mu}, \overline{\mu}]}\phi(y)-\mathcal{N}[\phi]=\int_0^1\frac{1}{1-s}\mu_s[p(\phi'_s(x))-x\phi'_s(x)] ds$.

Now the next job is to calculate the expectation on the right side of the equality (\ref {intro-derivative}).
 
 For $\phi\in C_b^{1,\alpha}(\mathbb{R})$ and $\mu\in\Theta$ with $\mu[\phi]=\mathcal{N}[\phi]$, we have

\textbf{Step 2.} $\bigg|\mu[p(\phi'(x))-x\phi'(x)]\bigg|\le 4 [\phi']_{\alpha} \mathcal{N}[|x|^{1+\alpha}],$ where $p(a)=\mathcal{N}[ax]$, $a\in \mathbb{R}$.

Following Step 1 and Step 2, we give the rate of convergence of Peng's LLN.

 Let $(X_k)_{k\ge 1}$ be a sequence of i.i.d random variables  under a sublinear expectation $\hat{\mathbf{E}}$ with $\hat{\mathbf{E}}[|X_1|^{2}]<\infty$. Set $S_n=\frac{X_1+\cdots+X_n}{n}$, $\underline{\mu}=-\hat{\mathbf{E}}[-X_1]$, $\overline{\mu}=\hat{\mathbf{E}}[X_1]$. 
 
\textbf{Step 3.} $\bigg|\mathbb{E}[\phi(S_n)]-\sup_{\mu\in[\underline{\mu}, \overline{\mu}]}\phi(\mu)\bigg|\le   4n^{-\alpha} \int_{0}^{1}[\partial_xv(\cdot, s)]_\alpha ds \times \mathbb{E}[|X_1|^{1+\alpha}].$

Note that the arguments in Step 1 and Step 3 are based on the assumption that $v\in C_b^{1,\alpha}(\mathbb{R}\times\mathbb{R}_+)$ or $\int_{0}^{1}[\partial_xv(\cdot, s)]_\alpha ds<\infty.$ Unfortunately, generally the solution $v$ to Equ. (\ref {pE-intro}) is only Lipschitz continuous. In order to give error estimates for Peng's LLN, we need to consider proper smooth approximations of the solution $v$, which turn out to be solutions to the non-homogeneous equations below.
\begin {eqnarray} \label {gpE-intro}
\begin {split}
\partial_tv(x,t)-p(\partial_xv(x,t)) & = f(x,t), \ (x,t)\in \mathbb{R}\times (0,\infty),\\
v(x,0) & =\phi(x).
\end {split}
\end {eqnarray}
In Section 3, we give modified arguments of Step 1--Step 3 corresponding to Equ. (\ref {gpE-intro}), which establish the Stein's method for LLN under sublinear expectations. In Section 4,  we construct two sequences of smooth approximations of the solutions to Equ. (\ref {pE-intro}), which are motivated by Krylov (2018) (\cite {Kr}). In Section 5, we prove the error estimates for Peng's LLN. In Section 6, we give a Stein type characterization for maximal distribution $\mathcal {M}_p$.

\section{Basic Notions of Sublinear Expectations}
Here we review basic notions and results of sublinear expectations.

Let $\Omega$ be a given set and let $\mathcal{H}$ be a  linear space of real valued functions defined on $\Omega$ such that for any $X\in\mathcal{H}$ and $\varphi\in C_{b,Lip}(\mathbb{R})$, we have $\varphi(X)\in \mathcal{H}$.
The space $\mathcal{H}$ is considered as our space of random variables.

\begin{definition}
A  sublinear expectation  is a functional $\hat{\mathbf{E}}: \mathcal{H}\to \mathbb{R}$ satisfying
\begin{description}
\item[E1.]  $\hat{\mathbf{E}}[X]\geq \hat{\mathbf{E}}[Y],\ \text{if}\ X\ge Y$;

\item[E2.]  $\hat{\mathbf{E}}[\lambda X]=\lambda \hat{\mathbf{E}}[X],\ \text{for}\ \lambda\geq 0$;

\item[E3.]  $\hat{\mathbf{E}}[c]=c, \ \text{for}\ c\in \mathbb{R}$;

\item[E4.]  $\hat{\mathbf{E}}[X+Y]\leq \hat{\mathbf{E}}[X]+\hat{\mathbf{E}}[Y]$, \ for  $X, Y\in \mathcal{H}$;

\item[E5.]  $\hat{\mathbf{E}}[\varphi_n(X)]\downarrow0$, for $X\in\mathcal{H}$ and $\varphi_n\in C_{b,Lip}(\mathbb{R})$, $\varphi_n\downarrow0$.
\end{description}
\end{definition}
The triple $(\Omega, \mathcal{H}, \hat{\mathbf{E}})$ is called a  sublinear expectation space. For $X\in\mathcal{H}$, set \[\mathcal{N}^X[\varphi]=\hat{\mathbf{E}}[\varphi(X)],  \ \varphi\in C_{b,Lip}(\mathbb{R}),\]  which is a sublinear expectation on $C_{b,Lip}(\mathbb{R})$. We say $X$ is distributed as  $\mathcal{N}^X$, write $X\sim\mathcal{N}^X$.  A functional $\mathcal{N}$ is a sublinear expectation  on $C_{b,Lip}(\mathbb{R})$ if and only if it can be represented as the
supremum expectation of a weakly compact subset $\Theta$ of probability measures  on $(%
\mathbb{R},\mathcal{B}(\mathbb{R}))$ (see \cite{DHP11}),
\begin {eqnarray}
\mathcal{N}[\varphi]=\sup_{\mu\in\Theta}\mu[\varphi], \ \textmd{for all} \  \varphi\in C_{b,Lip}(\mathbb{R}).
\end {eqnarray}

\begin {definition} Let  $(\Omega, \mathcal{H}, \hat{\mathbf{E}})$ be a sublinear expectation space. We say a random vector $\mathbf{ X}=(X_1,\cdots, X_m)\in\mathcal{H}^m$ is independent from $\mathbf{ Y}=(Y_1,\cdots, Y_n)\in\mathcal{H}^n$ if for any $\varphi\in C_{b,Lip}(\mathbb{R}^{n+m})$
\[\hat{\mathbf{E }}[\varphi(\mathbf{ Y},\mathbf{ X})]=\hat{\mathbf{ E}}[\hat{\mathbf{ E}}[\varphi(\mathbf{ y}, \mathbf{ X})]|_{\mathbf{ y}=\mathbf{ Y}}].\]
\end {definition}
In a sublinear expectation space,  the fact that $\mathbf{ X}$ is independent from $\mathbf{ Y}$ does not imply that $\mathbf{Y}$ is independent from $\mathbf{X}$. We say $(X_i)_{i\ge1}$ is a sequence of independent random variables means that $X_{i+1}$ is independent from $(X_1,\cdots, X_i)$ for each $i\in\mathbb{N}$.

\begin {definition} Let $(\Omega, \mathcal{H}, \hat{\mathbf{E}})$ and   $(\widetilde{\Omega}, \mathcal{\widetilde{H}}, \widetilde{\mathbf{E}})$ be two sublinear expectations. A random vector $\mathbf{ X}$ in  $(\Omega, \mathcal{H}, \hat{\mathbf{E}})$ is said to be identically distributed with another random vector $\mathbf{ Y}$ in $(\widetilde{\Omega}, \mathcal{\widetilde{H}}, \widetilde{\mathbf{E}})$ (write $\mathbf{ X}\mathop{=}\limits^d\mathbf{ Y}$), if for any bounded and Lipschitz function $\varphi$, \[\hat{\mathbf{ E}}[\varphi(X)]=\widetilde{\mathbf{ E}}[\varphi(Y)].\]

\end {definition}

\section {Stein's method for LLN under sublinear expectations}
Let $\mathcal{N}[\varphi]=\sup_{\mu\in\Theta}\mu[\varphi]$ be a sublinear expectation on  $C_{b,Lip}(\mathbb{R})$. Throughout this article, we suppose the following additional property:
\begin{description}
\item[(H)] $\lim_{N\rightarrow\infty}\mathcal{N}[|x|1_{[|x|>N]}]=0.$
\end{description}
Note that the condition $(H)$ is naturally satisfied if $\mathcal{N}[|x|^{1+\delta}]<\infty$ for some $\delta>0$.

Define $\xi: \mathbb{R}\rightarrow\mathbb{R}$ by $\xi(x)=x$. Sometimes, we write $\mathcal{N}_G[\varphi], \ \mathcal{N}[\varphi]$ and  $\mu[\varphi]$ by $\mathbb{E}_G[\varphi(\xi)], \ \mathbb{E}[\varphi(\xi)]$ and $E_\mu[\varphi(\xi)]$, respectively. For  $\varphi\in C_{b,Lip}(\mathbb{R})$, set $\Theta_\varphi=\{\mu\in \Theta:E_{\mu}[\varphi(\xi)]=\mathbb{E}[\varphi(\xi)]\}$.

\begin{lemma}   \label {SteinEquation-LLN}For $\phi\in C_{b,Lip}(\mathbb{R})$, let $v\in C_b^{1,\alpha}(\mathbb{R}\times\mathbb{R}_+)$ with some $\alpha\in (0,1]$, be the solution to the following equation:
\begin {eqnarray} \label {gpE}
\begin {split}
\partial_tv(x,t)-p(\partial_xv(x,t)) & = f(x,t), \ (x,t)\in \mathbb{R}\times (0,\infty),\\
v(x,0) & =\phi(x),
\end {split}
\end {eqnarray}
where $p(a)=\sup_{y\in[\underline{\mu},\overline{\mu}]}(y a)$, $a\in\mathbb{R}$, for two real numbers $\underline{\mu}\le\overline{\mu}$.

Set $\phi_s(x):=v( (1-s)x,s)$. Then
\begin{eqnarray}\label {SteinEquation}v(0,1)-\mathcal{N}[\phi]=\int_0^1\bigg(\frac{1}{1-s}E_{\mu_s}[\mathcal{L}_p\phi_s(\xi)]+E_{\mu_s}[f((1-s)\xi,s)]\bigg)ds,
\end {eqnarray} where $\mathcal{L}_p\phi_s(x)=p(\phi_s'(x))-x\phi_s'(x)$, $\mu_s\in\Theta_{\phi_s}$. \end{lemma}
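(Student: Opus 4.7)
The plan is to set $w(s):=\mathcal{N}[\phi_s]$, to observe the boundary values $w(0)=\mathcal{N}[\phi]$ and $w(1)=v(0,1)$ (which come from $\phi_0=\phi$ and from the fact that $\phi_1(x)\equiv v(0,1)$ is constant in $x$), and to obtain (\ref{SteinEquation}) by integrating an envelope-type formula for $w'(s)$. That $\phi_s\in C_{b,Lip}(\mathbb{R})$, so $w$ is well-defined, is immediate from $v\in C_b^{1,\alpha}(\mathbb{R}\times\mathbb{R}_+)$.

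First I would compute $\partial_s\phi_s$ by the chain rule, which gives $\partial_s\phi_s(x)=-x\,\partial_xv((1-s)x,s)+\partial_tv((1-s)x,s)$. Substituting (\ref{gpE}) and combining the identity $\phi_s'(x)=(1-s)\,\partial_xv((1-s)x,s)$ with the positive homogeneity $p(a/(1-s))=p(a)/(1-s)$ collapses this to
\begin{eqnarray*}
\partial_s\phi_s(x)=\frac{1}{1-s}\mathcal{L}_p\phi_s(x)+f((1-s)x,s).
\end{eqnarray*}
The apparent singularity at $s=1$ is fictitious: the bound $|p(a)-xa|\le|a|(|\underline{\mu}|+|\overline{\mu}|+|x|)$ applied to $a=\phi_s'(x)=O(1-s)$ yields $|\partial_s\phi_s(x)|\le C(1+|x|)$ on $\mathbb{R}\times(0,1)$, with $C$ depending only on $\|\partial_xv\|_\infty$, $\|f\|_\infty$, $\underline{\mu}$, $\overline{\mu}$.

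Because $\mathcal{N}[|\xi|]<\infty$ under (H), this pointwise bound shows that $s\mapsto E_\mu[\phi_s]$ is Lipschitz with a constant uniform in $\mu\in\Theta$, so $w(s)=\sup_{\mu\in\Theta}E_\mu[\phi_s]$ is Lipschitz on $[0,1]$ and hence absolutely continuous. At a point $s$ of differentiability I would pick $\mu_s\in\Theta_{\phi_s}$ (which exists by the weak compactness of $\Theta$). The inequality $w(s+h)\ge E_{\mu_s}[\phi_{s+h}]$, with equality at $h=0$, combined with dominated convergence using the majorant $C(1+|\xi|)\in L^1(\mu_s)$, yields a two-sided sandwich as $h\to 0^\pm$ that forces $w'(s)=E_{\mu_s}[\partial_s\phi_s]$. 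The fundamental theorem of calculus for absolutely continuous functions, together with the explicit form of $\partial_s\phi_s$ derived above, then produces (\ref{SteinEquation}).

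The delicate step is this envelope identity: one must justify interchanging the difference quotient with the expectation, and one needs a genuine maximizer $\mu_s\in\Theta_{\phi_s}$ rather than merely a maximizing sequence. Both requirements hinge on the uniform bound $|\partial_s\phi_s|\le C(1+|\xi|)$, which itself depends on the cancellation of $(1-s)$ in $\phi_s'$. Without that cancellation the factor $1/(1-s)$ in front of $\mathcal{L}_p\phi_s$ would render the right-hand side of (\ref{SteinEquation}) ill-defined, so this bookkeeping is the crux of the argument.
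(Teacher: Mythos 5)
Your proof is correct and follows essentially the same route as the paper: both define $w(s)=\mathcal{N}[\phi_s]$, identify $w'(s)$ with $E_{\mu_s}[\partial_s\phi_s(\xi)]$ for a maximizer $\mu_s\in\Theta_{\phi_s}$ using the chain rule, the PDE and the positive homogeneity of $p$, and then integrate. The only difference is that you establish the envelope identity directly by the two-sided sandwich at points of differentiability of the Lipschitz function $w$, whereas the paper obtains the one-sided derivative formulas by citing Lemma 2.2 of Hu, Peng and Song (2017).
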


\begin{proof} Set $w(s)=\mathbb{E}[v( (1-s)\xi,s)]$. Then $w(1)=v(0,1)$ and $w(0)=\mathcal{N}[\phi]$.
By Lemma 2.2 in Hu, Peng and Song (2017), we have, for $s\in (0,1)$,
\begin{eqnarray*}
\partial_s^+w(s):&=&\lim_{\delta\rightarrow0+}\frac{w(s+\delta)-w(s)}{\delta}\\
&=&\sup_{\mu_s\in\Theta_{\phi_s}}E_{\mu_s}[\partial_s\phi_s(\xi)]\\
&=&\sup_{\mu_s\in\Theta_{\phi_s}}E_{\mu_s}[\frac{1}{1-s}\mathcal{L}_p\phi_s(\xi)+f((1-s)\xi,s)]]
\end{eqnarray*}
and
\begin{eqnarray*}
\partial_s^-w(s):&=&\lim_{\delta\rightarrow0+}\frac{w(s-\delta)-w(s)}{-\delta}\\
&=&\inf_{\mu_s\in\Theta_{\phi_s}}E_{\mu_s}[\partial_s\phi_s(\xi)]\\
&=&\inf_{\mu_s\in\Theta_{\phi_s}}E_{\mu_s}[\frac{1}{1-s}\mathcal{L}_p\phi_s(\xi)+f((1-s)\xi,s)]]
\end {eqnarray*}

Noting
that $w$ is continuous on $[0,1]$ and locally Lipschitz continuous on $(0,1)$ by the supposed regularity properties of $v$, we have $w'(s)=\partial_s^+w(s)=\partial_s^-w(s)$  for a.e. $s\in(0,1)$ and consequently \[w(1)-w(0)=\int_0^1\partial_s^+w(s)ds=\int_0^1\partial_s^-w(s)ds.\]
\end{proof}

The next Lemma gives an estimate of the expectations on the right hand of  Equ. (\ref{SteinEquation}).

\begin {lemma} \label {SteinEstimate} Let $\alpha\in(0,1]$. Suppose  $\mathbb{E}[|\xi|^{1+\alpha}]<\infty$. For $\phi\in C_b^{1,\alpha}(\mathbb{R})$ and $\mu\in\Theta_{\phi}$, we have
\begin {eqnarray}
\bigg|E_{\mu}[\xi\phi'(\xi)-p_\xi(\phi'(\xi))]\bigg|\le 4 [\phi']_{\alpha} \mathbb{E}[|\xi|^{1+\alpha}],
\end {eqnarray}
where $p_\xi(a)=\mathbb{E}[a\xi]$, $a\in \mathbb{R}$.
\end {lemma}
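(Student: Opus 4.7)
The strategy is to bound the signed quantity $E_\mu[\xi\phi'(\xi)-p_\xi(\phi'(\xi))]$ separately from above and from below, exploiting in each direction the fact that $p_\xi$ is the support function of $[\underline\mu,\overline\mu]$, i.e.\ $p_\xi(a)=\sup_{y\in[\underline\mu,\overline\mu]}ya$. The optimality hypothesis $E_\mu[\phi(\xi)]=\mathbb{E}[\phi(\xi)]$ enters only in the lower direction; the upper direction uses only the weaker fact that $\mu\in\Theta$.

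For the upper direction I would set $m:=E_\mu[\xi]$, which lies in $[\underline\mu,\overline\mu]$ since $\mu\in\Theta$, so that $p_\xi(\phi'(\xi))\ge m\phi'(\xi)$ pointwise. Substituting and using $E_\mu[\xi-m]=0$ to subtract the constant $\phi'(m)$ yields
\[
E_\mu\big[\xi\phi'(\xi)-p_\xi(\phi'(\xi))\big]
\;\le\;
E_\mu\big[(\xi-m)(\phi'(\xi)-\phi'(m))\big]
\;\le\;
[\phi']_\alpha\,E_\mu[|\xi-m|^{1+\alpha}],
\]
and the routine triangle estimate together with $|m|^{1+\alpha}\le\mathbb{E}[|\xi|^{1+\alpha}]$ brings this under $4[\phi']_\alpha\mathbb{E}[|\xi|^{1+\alpha}]$.

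For the lower direction---i.e.\ an upper bound on $E_\mu[p_\xi(\phi'(\xi))-\xi\phi'(\xi)]$---I would expand $\phi$ by Taylor at the endpoint chosen according to the sign of $\phi'(\xi)$:
\[
\phi(c)-\phi(\xi)=\phi'(\xi)(c-\xi)+R_c(\xi),
\qquad
|R_c(\xi)|\le\tfrac{[\phi']_\alpha}{1+\alpha}|c-\xi|^{1+\alpha},
\]
with $c=\overline\mu$ on $\{\phi'(\xi)\ge0\}$ and $c=\underline\mu$ on $\{\phi'(\xi)<0\}$. On each set the first term of the right-hand side is precisely $p_\xi(\phi'(\xi))-\xi\phi'(\xi)$, so after integrating against $\mu$ and cashing in $E_\mu[\phi(\xi)]=\mathbb{E}[\phi(\xi)]$ the problem collapses to controlling the main term $\phi(\overline\mu)\,\mu(\phi'\ge0)+\phi(\underline\mu)\,\mu(\phi'<0)-\mathbb{E}[\phi(\xi)]$ up to an $O([\phi']_\alpha\mathbb{E}[|\xi|^{1+\alpha}])$ remainder.

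The main obstacle is that main term, since the Diracs $\delta_{\overline\mu}$ and $\delta_{\underline\mu}$ need not lie in $\Theta$ and so $\phi(\overline\mu)$ cannot be compared to $\mathbb{E}[\phi(\xi)]$ directly. To get around this I would use hypothesis (H) together with the weak compactness of $\Theta$ to pick $\nu^*\in\Theta$ realising $E_{\nu^*}[\xi]=\overline\mu$, then Taylor-expand $\phi$ around $\overline\mu$ and integrate against $\nu^*$: the linear term drops out by choice of $\nu^*$, leaving $\phi(\overline\mu)\le E_{\nu^*}[\phi(\xi)]+\tfrac{[\phi']_\alpha}{1+\alpha}E_{\nu^*}[|\xi-\overline\mu|^{1+\alpha}]\le\mathbb{E}[\phi(\xi)]+C[\phi']_\alpha\mathbb{E}[|\xi|^{1+\alpha}]$, and a symmetric argument handles $\phi(\underline\mu)$. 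Assembling the two directions, the final constant comes out to $4$ because $2^{2+\alpha}/(1+\alpha)\le 4$ on $(0,1]$.
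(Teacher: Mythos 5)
Your proof is correct, but it takes a genuinely different route from the paper's. The paper works with a single Taylor expansion of $\phi$ and $\phi'$ at the origin: it first uses sub-additivity of $\mathbb{E}$ together with the optimality $E_\mu[\phi(\xi)]=\mathbb{E}[\phi(\xi)]$ to get $|\phi'(0)E_\mu[\xi]-p_\xi(\phi'(0))|\le 2[\phi']_\alpha\mathbb{E}[|\xi|^{1+\alpha}]$, and then treats $\phi'(\xi)$ as a perturbation $\phi'(0)+R'_\xi$ of the constant $\phi'(0)$, absorbing the perturbation via the Lipschitz property of $p_\xi$ (costing $[\phi']_\alpha\mathbb{E}[|\xi|]\mathbb{E}[|\xi|^\alpha]+[\phi']_\alpha\mathbb{E}[|\xi|^{1+\alpha}]$); this is entirely elementary and never leaves the single measure $\mu$. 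You instead split the two-sided bound asymmetrically: your upper bound linearizes $\phi'$ at the barycenter $m=E_\mu[\xi]$ and is a covariance-type estimate valid for \emph{every} $\mu\in\Theta$ (a genuinely stronger observation than the paper makes -- optimality is not needed in that direction), while your lower bound identifies $p_\xi(\phi'(\xi))-\xi\phi'(\xi)$ pointwise with the first-order Taylor increment of $\phi$ toward the endpoint selected by the sign of $\phi'(\xi)$, and then needs the extra input of an extremal measure $\nu^*\in\Theta$ with $E_{\nu^*}[\xi]=\overline{\mu}$ (legitimately available from the weak compactness of $\Theta$ plus hypothesis (H), which the paper assumes throughout and which follows from $\mathbb{E}[|\xi|^{1+\alpha}]<\infty$). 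The price of your route is this compactness/attainment step, which the paper's argument avoids entirely; the payoff is a more transparent geometric picture and the factor $1/(1+\alpha)$ from the integral remainder, which is exactly what lets your constant $2^{2+\alpha}/(1+\alpha)$ squeeze under $4$ (with equality at $\alpha=0$ and $\alpha=1$), matching the paper's constant. Both arguments are sound; yours is longer but isolates precisely where the hypothesis $\mu\in\Theta_\phi$ is actually used.
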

\begin {proof} Taylor's  formula gives
\begin{eqnarray}
\label {Taylor1} \phi(\xi)&=&\phi(0)+\phi'(0)\xi+R_{\xi},\\
\label {Taylor2} \phi'(\xi)&=&\phi'(0)+R'_{\xi},
\end{eqnarray}
 with   $|R_{\xi}|\le[\phi']_{\alpha} |\xi|^{1+\alpha}$ and $|R'_{\xi}|\le[\phi']_{\alpha} |\xi|^{\alpha}.$

  Set $A:=\mathbb{E}[\phi(\xi)]=E_{\mu}[\phi(\xi)]$. Then
\begin {eqnarray*}A=\mathbb{E}[\phi(\xi)]&=&\mathbb{E}[\phi(0)+\phi'(0)\xi+R_{\xi}]\\
&\le& \phi(0)+\mathbb{E}[\phi'(0)\xi]+\mathbb{E}[R_{\xi}]\\
&\le& \phi(0)+p_{\xi}(\phi'(0))+[\phi']_{\alpha}\mathbb{E}[|\xi|^{1+\alpha}],
\end {eqnarray*} and
\begin {eqnarray*}A=\mathbb{E}[\phi(\xi)]&=&\mathbb{E}[\phi(0)+\phi'(0)\xi+R_{\xi}]\\
&\ge& \phi(0)+\mathbb{E}[\phi'(0)\xi]-\mathbb{E}[-R_{\xi}]\\
&\ge& \phi(0)+p_{\xi}(\phi'(0))-[\phi']_{\alpha}\mathbb{E}[|\xi|^{1+\alpha}].
\end {eqnarray*}
Therefore, \[\bigg|A-\phi(0)-p_{\xi}(\phi'(0))\bigg|\le[\phi']_{\alpha}\mathbb{E}[|\xi|^{1+\alpha}].\]
Noting that $A=E_{\mu}[\phi(\xi)]=\phi(0)+\phi'(0)E_{\mu}[\xi]+E_{\mu}[R_{\xi}]$, we have
\begin {eqnarray}
\label {estimate1}
\bigg|\phi'(0)E_{\mu}[\xi]-p_{\xi}(\phi'(0))\bigg|= \bigg|A-\phi(0)-E_{\mu}[R_{\xi}]-p_{\xi}(\phi'(0))\bigg|\le 2[\phi']_{\alpha}\mathbb{E}[|\xi|^{1+\alpha}].
\end {eqnarray}
Now let us compute the expectation $E_{\mu}[\xi\phi'(\xi)-p_{\xi}(\phi'(\xi))]$. By (\ref {Taylor2}), we have
\begin {eqnarray*}
& &\xi\phi'(\xi)-p_{\xi}(\phi'(\xi))\\
&=&\xi(\phi'(0)+R'_{\xi})-p_{\xi}(\phi'(0)+R'_{\xi})\\
&=&[\xi\phi'(0)-p_{\xi}(\phi'(0))]+[p_{\xi}(\phi'(0))-p_{\xi}(\phi'(0)+R'_{\xi})]+\xi R'_{\xi}.
\end {eqnarray*} So, by (\ref{estimate1}),
\begin {eqnarray*}
& &\bigg|E_{\mu}[\xi\phi'(\xi)-p_{\xi}(\phi'(\xi))]\bigg|\\
&=&\bigg|E_{\mu}[\xi\phi'(0)-p_{\xi}(\phi'(0))]+E_{\mu}[p_{\xi}(\phi'(0))-p_{\xi}(\phi'(0)+R'_{\xi})]+E_{\mu}[\xi R'_{\xi}]\bigg|\\
&\le&2[\phi']_{\alpha}\mathbb{E}[|\xi|^{1+\alpha}]+[\phi']_{\alpha}\mathbb{E}[|\xi|]\mathbb{E}[|\xi|^{\alpha}]+[\phi']_{\alpha}\mathbb{E}[|\xi|^{1+\alpha}]\\
&\le&4[\phi']_{\alpha}\mathbb{E}[|\xi|^{2+\alpha}].
\end {eqnarray*}
\end {proof}

\begin {proposition} \label {Stein-pro} Let $v\in C_b^{1,\alpha}(\mathbb{R}\times\mathbb{R}_+)$ for some $\alpha\in (0,1]$, be a solution to Equ. (\ref{gpE}) with $p(a)=\mathbb{E}[a\xi]$. For $0\le t \le \bar{t}\le 1$, set  $\delta=\bar{t}-t$.
Then
\begin {eqnarray}
\begin {split} \label {Est1-local}
v(0,\bar{t})-\mathbb{E}[v( \delta\xi,t)]\ge & - 4\delta^{\alpha} \int_{t}^{\bar{t}}[\partial_xv(\cdot, s)]_\alpha ds \times \mathbb{E}[|\xi|^{1+\alpha}]\\
& -\int_t^{\bar{t}}\mathbb{E}[-f((\bar{t}- s)\xi, s)] ds,
\end {split}
\end {eqnarray}
and
\begin {eqnarray}
\begin {split}
v(0,\bar{t})-\mathbb{E}[v( \sqrt{\delta}\xi,t)]\le  & 4\delta^{\alpha} \int_{t}^{\bar{t}}[\partial_xv(\cdot, s)]_\alpha ds \times \mathbb{E}[|\xi|^{1+\alpha}]\\
&+ \int_t^{\bar{t}}\mathbb{E}[f((\bar{t}- s)\xi, s)] ds.
\end {split}
\end {eqnarray}
\end{proposition}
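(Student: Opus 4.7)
The plan is to mirror the derivation of Lemma \ref{SteinEquation-LLN} on the sub-interval $[t,\bar{t}]$, with a rescaled parametrization chosen so that the endpoint values of the auxiliary function $w$ coincide with the two terms on the left-hand side of (\ref{Est1-local}).

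First I would introduce $\phi_s(x):=v((\bar{t}-s)x,s)$ for $s\in[t,\bar{t}]$, so that $\phi_{\bar{t}}(x)=v(0,\bar{t})$ and $\phi_t(x)=v(\delta x,t)$, and set $w(s):=\mathbb{E}[\phi_s(\xi)]$; this gives $w(\bar{t})-w(t)=v(0,\bar{t})-\mathbb{E}[v(\delta\xi,t)]$. The chain rule combined with the PDE (\ref{gpE}) and the positive homogeneity of $p$ yields
\[
\partial_s\phi_s(x)=-x\,\partial_xv((\bar{t}-s)x,s)+\partial_tv((\bar{t}-s)x,s)=\tfrac{1}{\bar{t}-s}\mathcal{L}_p\phi_s(x)+f((\bar{t}-s)x,s),
\]
once one observes that $\phi_s'(x)=(\bar{t}-s)\,\partial_xv((\bar{t}-s)x,s)$.

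Next, Lemma 2.2 of Hu--Peng--Song (as invoked in the proof of Lemma \ref{SteinEquation-LLN}) applies verbatim here to give, for a.e.\ $s\in(t,\bar{t})$,
\[
w'(s)=\sup_{\mu_s\in\Theta_{\phi_s}}E_{\mu_s}\!\left[\tfrac{1}{\bar{t}-s}\mathcal{L}_p\phi_s(\xi)+f((\bar{t}-s)\xi,s)\right]=\inf_{\mu_s\in\Theta_{\phi_s}}E_{\mu_s}[\,\cdots\,].
\]
To control $E_{\mu_s}[\mathcal{L}_p\phi_s(\xi)]$ I would invoke Lemma \ref{SteinEstimate} (whose hypothesis $p(a)=\mathbb{E}[a\xi]$ is precisely what is assumed here), obtaining $\bigl|E_{\mu_s}[\mathcal{L}_p\phi_s(\xi)]\bigr|\le 4[\phi_s']_\alpha\,\mathbb{E}[|\xi|^{1+\alpha}]$. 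A direct chain-rule computation gives $[\phi_s']_\alpha=(\bar{t}-s)^{1+\alpha}[\partial_xv(\cdot,s)]_\alpha$, so $\tfrac{1}{\bar{t}-s}[\phi_s']_\alpha=(\bar{t}-s)^\alpha[\partial_xv(\cdot,s)]_\alpha\le\delta^\alpha[\partial_xv(\cdot,s)]_\alpha$ since $\bar{t}-s\le\delta$.

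To conclude I would split into the two bounds. For the lower estimate, take a maximizer $\mu_s$ realizing the supremum representation of $w'(s)$ and apply the one-sided inequality $E_{\mu_s}[\mathcal{L}_p\phi_s(\xi)]\ge -4[\phi_s']_\alpha\,\mathbb{E}[|\xi|^{1+\alpha}]$ together with the sublinear bound $E_{\mu_s}[f(\cdot)]\ge -\mathbb{E}[-f(\cdot)]$. For the upper estimate, pass to the infimum representation with any $\mu_s\in\Theta_{\phi_s}$ and apply the opposite one-sided bounds $E_{\mu_s}[\mathcal{L}_p\phi_s(\xi)]\le 4[\phi_s']_\alpha\,\mathbb{E}[|\xi|^{1+\alpha}]$ and $E_{\mu_s}[f]\le\mathbb{E}[f]$. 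Integrating these pointwise bounds on $w'(s)$ over $[t,\bar{t}]$ produces the two displayed inequalities (reading the $\sqrt{\delta}$ on the left of the second inequality as a typographical slip for $\delta$, since the LLN scaling used above is linear in $\bar{t}-s$). The only delicate bookkeeping is tracking signs when $-x\phi_s'(\xi)$ is moved across $\mathbb{E}$ versus $E_{\mu_s}$; otherwise the proof is a direct specialization of Lemma \ref{SteinEquation-LLN} combined with Lemma \ref{SteinEstimate}.
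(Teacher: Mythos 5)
Your proposal is correct and follows essentially the same route as the paper: both reduce the claim to Lemma \ref{SteinEquation-LLN} together with the estimate of Lemma \ref{SteinEstimate}, via the identity $[\phi_s']_\alpha=(\bar{t}-s)^{1+\alpha}[\partial_xv(\cdot,s)]_\alpha$ and the one-sided bounds $E_{\mu_s}[f]\ge-\mathbb{E}[-f]$, $E_{\mu_s}[f]\le\mathbb{E}[f]$. The only cosmetic difference is that the paper rescales $[t,\bar{t}]$ to $[0,1]$ (setting $v_{t,\bar{t}}(x,s)=v(\delta x,\delta s+t)$) so as to invoke Lemma \ref{SteinEquation-LLN} verbatim, whereas you rerun the envelope-derivative argument directly on $[t,\bar{t}]$; your reading of the $\sqrt{\delta}$ as a typographical slip for $\delta$ is also consistent with the paper's own proof, which only treats the $\delta$-scaling.
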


\begin {proof}
For $0\le t \le \bar{t}\le 1$, set $v_{t,\bar{t}}(x,s)=v(\delta x, \delta s+t)$, $(x,s)\in \mathbb{R}\times [0,1]$. Then $v_{t,\bar{t}}$ is the solution to the equation below:
\begin {eqnarray*}
\partial_sv_{t,\bar{t}}(x,s)-p(\partial_xv_{t,\bar{t}}(x,s)) &=& \delta f(\delta x,t+\delta s), \ (x,s)\in \mathbb{R}\times (0,1],\\
v_{t,\bar{t}}(x,0) &=&v(\delta x, t).
\end {eqnarray*}
By Lemma \ref{SteinEquation-LLN}, we get
\begin {eqnarray*}& &v(0,t+\delta)-\mathbb{E}[v( \delta\xi,t)]\\
&=&v_{t,\bar{t}}(0,1)-\mathbb{E}[v_{t,\bar{t}}(\xi,0)]\\
&=&\int_0^1\frac{1}{1-s}E_{\mu^{t,\bar{t}}_{\delta s+t}}[\mathcal{L}_p\phi^{t,\bar{t}}_{\delta s+t}(\xi)]+E_{\mu^{t,\bar{t}}_{\delta s+t}}[\delta f(\delta(1-s)\xi, t+\delta s)] ds\\
&=&\int_t^{\bar{t}}\frac{1}{\bar{t}-s}E_{\mu^{t,\bar{t}}_{s}}[\mathcal{L}_p\phi^{t,\bar{t}}_{s}(\xi)]+E_{\mu^{t,\bar{t}}_{s}}[ f((\bar{t}- s)\xi, s)] ds.
\end {eqnarray*}
Here $\phi^{t,\bar{t}}_s(x)=v((\bar{t}-s)x, s)$, $\mu^{t,\bar{t}}_s\in\Theta_{\phi^{t,\bar{t}}_s}$,  $s\in[t, \bar{t}]$.

 By Lemma \ref {SteinEstimate}, we have
\begin {eqnarray*}& &\bigg|\int_t^{\bar{t}}\frac{1}{\bar{t}-s}E_{\mu^{t,\bar{t}}_{s}}[\mathcal{L}_{G_\xi}\phi^{t,\bar{t}}_{s}(\xi)]ds \bigg|\\
&\le&\int_t^{\bar{t}}\frac{4}{\bar{t}-s}[\partial_x\phi^{t,\bar{t}}_s]_\alpha ds  \times \mathbb{E}[|\xi|^{1+\alpha}]\\
&=&\int_t^{\bar{t}}4(\bar{t}-s)^{\alpha}[\partial_xv(\cdot, s)]_\alpha ds  \times \mathbb{E}[|\xi|^{1+\alpha}]\\
&\le& 4\delta^{\alpha}\int_t^{\bar{t}}[\partial_xv(\cdot, s)]_\alpha ds \times \mathbb{E}[|\xi|^{1+\alpha}].
\end {eqnarray*}
\end {proof}

\begin {theorem} \label {Stein-Est} Let $(X_i)_{i\ge 1}$ be a sequence of independent and identically distributed random variables  under a sublinear expectation $\hat{\mathbf{E}}$ such that $\hat{\mathbf{E}}[|X_1|^{1+\alpha}]<\infty$ for some $\alpha\in (0,1]$. Let $v\in C_b^{1,\alpha}(\mathbb{R}\times\mathbb{R}_+)$  be a solution to Equ. (\ref {gpE})  with $p(a)=\mathbb{E}[aX_1]$. Set $S_n=\frac{X_1+\cdots+X_n}{n}$. Then we have
\begin {eqnarray}
\begin {split} \label {Est1-m}
v(0,1)-\mathbb{E}[\phi(S_n)]\ge & - 4n^{-\alpha} \int_{0}^{1}[\partial_xv(\cdot, s)]_\alpha ds \times \mathbb{E}[|X_1|^{1+\alpha}]\\
& -\Sigma_{i=1}^n\int_{\frac{i-1}{n}}^{\frac{i}{n}}\mathbb{E}[-f((\frac{i}{n}- s)X_1, s)] ds.
\end {split}
\end {eqnarray}
and
\begin {eqnarray}
\begin {split}
v(0,1)-\mathbb{E}[\phi(S_n)]\le  & 4n^{-\alpha} \int_{0}^{1}[\partial_xv(\cdot, s)]_\alpha ds \times \mathbb{E}[|X_1|^{1+\alpha}]\\
&+\Sigma_{i=1}^n\int_{\frac{i-1}{n}}^{\frac{i}{n}}\mathbb{E}[f((\frac{i}{n}- s)X_1, s)] ds.
\end {split}
\end {eqnarray}
\end {theorem}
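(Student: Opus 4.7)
The plan is a Lindeberg-type swapping argument across the $n$ time slabs of $[0,1]$, with Proposition~\ref{Stein-pro} driving each step. Set $T_k := (X_1+\cdots+X_k)/n$ with $T_0=0$, so that $v(T_0,1) = v(0,1)$ and $v(T_n,0) = \phi(S_n)$. The starting point is the telescoping identity
\[
v(0,1) - \hat{\mathbf{E}}[\phi(S_n)] \;=\; \sum_{k=1}^{n}\Big(\hat{\mathbf{E}}[v(T_{k-1},\,1-\tfrac{k-1}{n})] - \hat{\mathbf{E}}[v(T_k,\,1-\tfrac{k}{n})]\Big).
\]
I will bound each summand by applying Proposition~\ref{Stein-pro} on the slab $[1-k/n,\,1-(k-1)/n]$ of thickness $\delta = 1/n$.

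For the one-step estimate, fix $k$ and a parameter $y\in\mathbb{R}$, and observe that the translate $v^y(x,t) := v(x+y,t)$ lies in $C_b^{1,\alpha}$ with the same spatial H\"older seminorm as $v$ and solves Equ.~(\ref{gpE}) with forcing $f(\cdot+y,\cdot)$ and initial datum $\phi(\cdot+y)$. Proposition~\ref{Stein-pro} applied to $v^y$ with $t = 1-k/n$, $\bar t = 1-(k-1)/n$ yields a pointwise-in-$y$ two-sided estimate for $v(y,\,1-(k-1)/n) - \hat{\mathbf{E}}[v(X_1/n+y,\,1-k/n)]$, whose error is the usual $n^{-\alpha}$ deterministic H\"older term plus a $y$-translated $f$-integral on $[1-k/n,\,1-(k-1)/n]$. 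Substituting $y = T_{k-1}$ and invoking the definition of independence of $X_k$ from $(X_1,\ldots,X_{k-1})$ together with the i.i.d.\ hypothesis, the iterated expectation $\hat{\mathbf{E}}[\hat{\mathbf{E}}[v(X_1/n+y,\,1-k/n)]|_{y=T_{k-1}}]$ equals $\hat{\mathbf{E}}[v(T_k,\,1-k/n)]$. Taking $\hat{\mathbf{E}}$ on the pointwise inequality and using the elementary consequence $\hat{\mathbf{E}}[A-C]\ge \hat{\mathbf{E}}[A]-\hat{\mathbf{E}}[C]$ of subadditivity converts this into the desired one-step estimate for $\hat{\mathbf{E}}[v(T_{k-1},\,1-\tfrac{k-1}{n})]-\hat{\mathbf{E}}[v(T_k,\,1-\tfrac{k}{n})]$.

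Summing over $k=1,\ldots,n$ telescopes the left-hand side; the individual $[\partial_x v(\cdot,s)]_\alpha$ integrals combine into the full integral over $[0,1]$, and the slab-wise $f$-integrals aggregate into the sum on the right of (\ref{Est1-m}). The lower bound follows, and the upper bound is obtained symmetrically from the matching half of Proposition~\ref{Stein-pro}. The one genuinely delicate point is reconciling the $y$-shifted $f$-integral produced by the translated Proposition~\ref{Stein-pro} with the shift-free $\hat{\mathbf{E}}[-f((i/n-s)X_1,s)]$ appearing in the theorem: this is consistent provided the forcing $f$ has an $x$-structure under which the translate is absorbed (as will be the case for the smooth approximations constructed in Section~4, where $f$ will have a very rigid dependence on $x$). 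I expect this $f$-term bookkeeping to be the main place where care is needed; everything else is a routine iteration of Proposition~\ref{Stein-pro} together with the subadditivity and independence of $\hat{\mathbf{E}}$.
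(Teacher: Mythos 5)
Your proposal follows essentially the same route as the paper: the same telescoping over time slabs $A_{i,n}=\hat{\mathbf{E}}[v(S_{i,n},1-\tfrac{i}{n})]$, the same use of independence to reduce each increment to a pointwise-in-$x$ quantity, subadditivity to pass to a supremum, and Proposition~\ref{Stein-pro} on each slab of thickness $1/n$. The $x$-shift in the forcing term that you flag as the delicate point is silently dropped in the paper's own proof as well (it is harmless in the later applications, where only uniform-in-$x$ bounds on $f$ are used), so your bookkeeping concern is a fair observation rather than a defect of your argument.
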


\begin {proof}  We only prove (\ref {Est1-m}).
Set, for $1\le i\le n$,
\begin{eqnarray*} S_{0,n}=0, \ S_{i,n}=\sum_{k=1}^i\frac{X_k}{n},
\end{eqnarray*}
and, for $0\le i\le n$,
 \[A_{i,n}=\hat{\mathbf{E}}[v(S_{i,n},1-\frac{i}{n})].\]

Note that $A_{n,n}=\hat{\mathbf{E}}[\phi(S_n)]$, $A_{0,n}=v(0,1)$, and
\begin{eqnarray*}
\hat{\mathbf{E}}[\phi(S_n)]-v(0,1)&=&\sum_{i=1}^n (A_{i,n}-A_{i-1,n})\\
&=&\sum_{i=1}^n\big(\hat{\mathbf{E}}[b_{i,n}(S_{i-1,n})]-\hat{\mathbf{E}}[c_{i,n}(S_{i-1,n})]\big)\\
&\le& \sum_{i=1}^n\sup_{x\in\mathbb{R}}\big(b_{i,n}(x)-c_{i,n}(x)\big),
\end{eqnarray*} where $b_{i,n}(x)=\hat{\mathbf{E}}[v(x+\frac{X_i}{n}, 1-\frac{i}{n})]$ and $c_{i,n}(x)=v(x, 1-\frac{i-1}{n})$.  Then, by (\ref {Est1-local}) in Proposition \ref {Stein-pro}, we have

\begin{eqnarray*}b_{i,n}(x)-c_{i,n}(x)\le & &4n^{-\alpha} \int_{1-\frac{i}{n}}^{1-\frac{i-1}{n}}[\partial_xv(\cdot, s)]_\alpha ds \times \mathbb{E}[|X_1|^{1+\alpha}]\\
& &+\int_{1-\frac{i}{n}}^{1-\frac{i-1}{n}}\mathbb{E}[-f((\frac{n-i+1}{n}- s)X_1, s)] ds.
\end {eqnarray*}

Noting that the right hand of the above inequality is independent of  $x$, we get
\begin{eqnarray*}A_{i,n}-A_{i-1,n}\le & & 4n^{-\alpha} \int_{1-\frac{i}{n}}^{1-\frac{i-1}{n}}[\partial_xv(\cdot, s)]_\alpha ds \times \mathbb{E}[|X_1|^{1+\alpha}]\\
& &+\int_{1-\frac{i}{n}}^{1-\frac{i-1}{n}}\mathbb{E}[-f((\frac{n-i+1}{n}- s)X_1, s)] ds
\end {eqnarray*} and 
\begin {eqnarray}
\begin {split}
\mathbb{E}[\phi(S_n)]-v(0,1)\le &  4n^{-\alpha} \int_{0}^{1}[\partial_xv(\cdot, s)]_\alpha ds \times \mathbb{E}[|X_1|^{1+\alpha}]\\
& +\Sigma_{i=1}^n\int_{\frac{i-1}{n}}^{\frac{i}{n}}\mathbb{E}[-f((\frac{i}{n}- s)X_1, s)] ds.
\end {split}
\end {eqnarray}
\end {proof}

Define $\underline {\mu}=-\hat{\mathbf{E}}[-X_1]$, $\overline {\mu}=\hat{\mathbf{E}}[X_1]$. Then $v(x,t):=\sup_{y\in[\underline{\mu}, \overline{\mu}]}\phi(x+ty)$ is the viscosity solution to the following equation:
\begin {eqnarray} \label {pE}
\begin {split}
\partial_tv(x,t)-p(\partial_xv(x,t)) & =0, \ (x,t)\in \mathbb{R}\times (0,\infty),\\
v(x,0) & =\phi(x).
\end {split}
\end {eqnarray}
So, if $v\in C_b^{1,\alpha}(\mathbb{R})$, it follows from Theorem \ref {Stein-Est} that
\[\bigg|\mathbb{E}[\phi(S_n)]-\sup_{y\in[\underline{\mu}, \overline{\mu}]}\phi(y)\bigg|\le   4n^{-\alpha} \int_{0}^{1}[\partial_xv(\cdot, s)]_\alpha ds \times \mathbb{E}[|X_1|^{1+\alpha}],\] which gives an estimate of the convergence rate for Peng's LLN under sublinear expectations. Unfortunately, generally the solution $v$ to Equ. (\ref {pE}) is just Lipschitz continuous. In order to give error estimates for Peng's LLN, we need to consider proper smooth approximations of the solutions to Equ. (\ref {pE}), which turn out to be solutions to equations of type (\ref {gpE}).

\section {\label {section-gpE}Smooth approximations of the solutions to Equ. (\ref {pE})}
In this section,  we construct two sequences of smooth approximations of the solutions to Equ. (\ref {pE}), which are motivated by Krylov (2018) (\cite {Kr}).

Take a nonnegative $\zeta \in C _ { 0 } ^ { \infty } \left( \mathbb { R } ^ { 2 } \right)$ with unit integral and support in $\{(x,t): |x|<1, 0<t<1 \}$ and for $\varepsilon \in ( 0,1 )$ introduce $\zeta _ { \varepsilon } ( x , t) = \varepsilon ^ { - 2 } \zeta \left( x / \varepsilon  , t / \varepsilon \right)$. Next, for locally summable $u(x,t)$ use the notation $$u_\varepsilon  =  u * \zeta _ { \varepsilon }.$$ In the sequel, we shall denote by $C_\zeta$ a constant depending only on $\zeta$.

\begin {lemma} \label {lem-gpE-App1} For $\phi$ satisfying $|\phi(x)-\phi(y)|\le |x-y|$, set $v(x,t)=\sup_{y\in{[\underline{\mu}, \overline{\mu}]}}\phi(x+ty)$. Then $v_\varepsilon$ satisfies
\begin {eqnarray} \label {gpE-App1}
\partial_tv_\varepsilon(x,t)-p(\partial_xv_\varepsilon(x,t)) & = f_\varepsilon(x,t)\ge0, \ (x,t)\in \mathbb{R}\times (\varepsilon,\infty).
\end {eqnarray}
\end {lemma}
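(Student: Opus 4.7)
The plan is to derive a pointwise a.e.\ inequality $\partial_t v - y\,\partial_x v \ge 0$ valid for each fixed $y\in[\underline\mu,\overline\mu]$, convolve it with the nonnegative mollifier $\zeta_\varepsilon$, and finally take the supremum over $y$ to obtain $f_\varepsilon\ge 0$. The key observation is that nonnegativity of the difference is preserved by convolution with a nonnegative kernel, while the supremum defining $p$ can be pushed to the outside as an infimum.

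First, I would record the dynamic programming inequality
\begin{equation*}
v(x,t+\tau)\;\ge\;v(x+y\tau,t)\qquad \text{for every } y\in[\underline\mu,\overline\mu],\ \tau\ge 0,
\end{equation*}
which follows immediately from the sup representation of $v$: for any $z\in[\underline\mu,\overline\mu]$, the convex combination $z':=(y\tau+tz)/(t+\tau)$ again lies in $[\underline\mu,\overline\mu]$ and satisfies $x+(t+\tau)z'=x+y\tau+tz$, so $v(x,t+\tau)\ge \phi(x+(t+\tau)z')=\phi(x+y\tau+tz)$; taking sup over $z$ on the right gives the claim. Since $\phi$ is $1$-Lipschitz and $[\underline\mu,\overline\mu]$ is bounded, $v$ is jointly Lipschitz on $\mathbb{R}\times[0,\infty)$, hence by Rademacher's theorem differentiable a.e. At any differentiability point $(x,t)$, dividing the dynamic programming inequality by $\tau>0$ and letting $\tau\downarrow 0$ yields
\begin{equation*}
\partial_t v(x,t)\;\ge\;y\,\partial_x v(x,t),\qquad y\in[\underline\mu,\overline\mu].
\end{equation*}

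Second, for $(x,t)\in\mathbb{R}\times(\varepsilon,\infty)$, the support condition on $\zeta_\varepsilon$ ensures that $v_\varepsilon$ samples $v(x-u,t-s)$ only over $t-s>0$, so $v_\varepsilon\in C^\infty$ there and the standard commutation $\partial v_\varepsilon=(\partial v)*\zeta_\varepsilon$ applies (the a.e.\ derivatives of the Lipschitz $v$ are bounded, hence locally integrable). Convolving the a.e.\ inequality above with the nonnegative $\zeta_\varepsilon$ gives, for each fixed $y\in[\underline\mu,\overline\mu]$,
\begin{equation*}
\partial_t v_\varepsilon(x,t)-y\,\partial_x v_\varepsilon(x,t)=\bigl((\partial_t v-y\,\partial_x v)*\zeta_\varepsilon\bigr)(x,t)\ge 0.
\end{equation*}
Taking the infimum over $y\in[\underline\mu,\overline\mu]$ on the left then produces
\begin{equation*}
f_\varepsilon(x,t):=\partial_t v_\varepsilon(x,t)-p(\partial_x v_\varepsilon(x,t))=\inf_{y\in[\underline\mu,\overline\mu]}\bigl(\partial_t v_\varepsilon(x,t)-y\,\partial_x v_\varepsilon(x,t)\bigr)\ge 0,
\end{equation*}
which is precisely the desired equation with $f_\varepsilon\ge 0$.

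The only delicate point is the passage from the dynamic programming inequality to its pointwise derivative form, which requires joint differentiability of $v$ at $(x,t)$; this is supplied by Rademacher's theorem applied to the Lipschitz function $v$. Everything else---the joint Lipschitz continuity, the smoothness of $v_\varepsilon$ on $\mathbb{R}\times(\varepsilon,\infty)$, and the commutation of $\partial_x,\partial_t$ with convolution against $\zeta_\varepsilon$---is routine.
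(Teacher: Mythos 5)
Your proof is correct, and its engine is the same as the paper's: the dynamic programming inequality $v(x,t+\tau)\ge v(x+y\tau,t)$ for each fixed $y\in[\underline{\mu},\overline{\mu}]$, the fact that convolution against the nonnegative kernel $\zeta_\varepsilon$ preserves pointwise inequalities, and finally pulling the supremum over $y$ outside as an infimum. The one genuine difference is the order of operations. The paper convolves the finite-difference inequality itself, obtaining $v_\varepsilon(x,t+\delta)\ge\sup_{y}v_\varepsilon(x+\delta y,t)$ for $t>\varepsilon$, and only then differentiates in $\delta$ at $\delta=0$ --- legitimately, because $v_\varepsilon$ is already smooth. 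You instead differentiate $v$ first, almost everywhere via Rademacher's theorem, and then mollify the resulting a.e.\ differential inequality $\partial_t v\ge y\,\partial_x v$. Your route therefore needs two extra (standard) facts: Rademacher's theorem for the jointly Lipschitz $v$, and the identification of the a.e.\ pointwise derivatives of a Lipschitz function with its distributional derivatives so that $\partial v_\varepsilon=(\partial v)*\zeta_\varepsilon$ really convolves the Rademacher derivatives. You invoke both correctly (the Lipschitz hypothesis, not mere local integrability of the a.e.\ derivatives, is what makes the commutation valid), so the argument closes; the paper's ordering is just slightly more economical in that it avoids this machinery entirely. Both arguments reach $f_\varepsilon\ge0$ on $\mathbb{R}\times(\varepsilon,\infty)$.
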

\begin {proof}
Note that $v(x, t+\delta)=\sup_{y\in{[\underline{\mu}, \overline{\mu}]}}v(x+\delta y,t)$. It follows that, for $t>\varepsilon$,
$$v_\varepsilon(x, t+\delta)\ge\sup_{y\in{[\underline{\mu}, \overline{\mu}]}}v_\varepsilon(x+\delta y,t),$$ which implies that, for any $t>\varepsilon$,
$$\partial_tv_\varepsilon-p(\partial_xv_\varepsilon)=:f_\varepsilon\ge0.$$ \end {proof}

Let $(X_k)_{k\ge 1}$ be a sequence of independent and identically distributed random variables  under a sublinear expectation $\hat{\mathbf{E}}$ with $\hat{\mathbf{E}}[|X_1|^{2}]<\infty$. For $n,k\in\mathbb{N}$,  define recursively, $v_n(x,0)=\phi(x)$,
$$v_n(x,\frac{k}{n})=\hat{\mathbf{E}}[v_n(x+\frac{X_k}{n}, \frac{k-1}{n})].$$
Define $T _ { n } = \{ k / n : k \in \mathbb{N} \}$. Extend $v_n(x,t)$ to the whole of $\mathbb{R}_+$ keeping its values on $T_n$ and making it
constant on each interval $( k / n , ( k + 1 ) / n ]$ and equal there to $v _ { n } (x, (k+1) / n)$. We
keep the notation $v_n$ for the extended function.

\begin {lemma} \label {lem-gpE-App2} For $\phi$ satisfying $|\phi(x)-\phi(y)|\le |x-y|$, let $v_n$ be the function defined above. Then $v_{n, \varepsilon}$ satisfies
 \begin {eqnarray}\label {gpE-App2}\partial_t v_{n,\varepsilon}(x,t)-p(\partial_xv_{n,\varepsilon}(x,t))=:f_{n,\varepsilon}(x,t), \ \ (x,t)\in \mathbb{R}\times (\varepsilon+\frac{1}{n},\infty),
 \end {eqnarray}
where $f_{n,\varepsilon}(x,t)\ge-\frac{C_\zeta}{n\varepsilon}(\frac{1}{2}\hat{\mathbf{E}}[|X_1|^2]+3\hat{\mathbf{E}}[|X_1|])$ and $C_{\zeta}$  depends only on $\zeta$.
\end {lemma}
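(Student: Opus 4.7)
The strategy is to transfer the discrete dynamic-programming identity satisfied by $v_n$ over to its mollification $v_{n,\varepsilon}$, which is smooth, and then compare its forward time difference with $p(\partial_x v_{n,\varepsilon})$ via Taylor expansion in both $x$ and $t$. A direct check of the extension convention shows that for every $t>0$ the identity $v_n(x, t+1/n) = \hat{\mathbf{E}}[v_n(x+X_1/n, t)]$ holds (shifting $t$ by $1/n$ shifts the discrete index by one). Convolving both sides against the nonnegative $\zeta_\varepsilon$ and invoking the Jensen-type inequality
$$\int \hat{\mathbf{E}}\bigl[g(X_1,y,s)\bigr]\zeta_\varepsilon(y,s)\,dy\,ds \;\ge\; \hat{\mathbf{E}}\Bigl[\int g(X_1,y,s)\zeta_\varepsilon(y,s)\,dy\,ds\Bigr],$$
which is immediate from the representation $\hat{\mathbf{E}}=\sup_{P\in\Theta}E_P$ combined with Fubini, produces, for $t>\varepsilon+1/n$, the mollified inequality $v_{n,\varepsilon}(x, t+1/n)\ge\hat{\mathbf{E}}[v_{n,\varepsilon}(x+X_1/n, t)]$.

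Since $v_{n,\varepsilon}$ is smooth, Taylor expansion in $x$ combined with subadditivity and the identity $\hat{\mathbf{E}}[aX_1]=p(a)$ gives
$$n\,\hat{\mathbf{E}}\bigl[v_{n,\varepsilon}(x+X_1/n,t)-v_{n,\varepsilon}(x,t)\bigr] \;\ge\; p\bigl(\partial_x v_{n,\varepsilon}(x,t)\bigr) -\frac{1}{2n}\|\partial_{xx}v_{n,\varepsilon}\|_\infty\,\hat{\mathbf{E}}[|X_1|^2],$$
while a second Taylor expansion in $t$ (applied to the smooth scalar function $v_{n,\varepsilon}(x,\cdot)$) yields $n[v_{n,\varepsilon}(x,t+1/n)-v_{n,\varepsilon}(x,t)] = \partial_t v_{n,\varepsilon}(x,t) + O(\|\partial_{tt}v_{n,\varepsilon}\|_\infty/n)$. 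Chaining these with the mollified dynamic programming inequality reduces the lemma to
$$f_{n,\varepsilon}(x,t) \;\ge\; -\frac{1}{2n}\bigl(\|\partial_{xx}v_{n,\varepsilon}\|_\infty\,\hat{\mathbf{E}}[|X_1|^2]+\|\partial_{tt}v_{n,\varepsilon}\|_\infty\bigr).$$

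The hard part will be bounding $\|\partial_{tt}v_{n,\varepsilon}\|_\infty$, since $v_n$ is only piecewise constant in $t$. The spatial bound is painless: an induction on the recursion shows that $v_n(\cdot,t)$ inherits the $1$-Lipschitz property of $\phi$ uniformly in $t$, so the standard mollifier estimate gives $\|\partial_{xx}v_{n,\varepsilon}\|_\infty \le C_\zeta/\varepsilon$. For $\partial_{tt}$, the key observation is that each time-jump of $v_n(x,\cdot)$ at a point $k/n$ has magnitude at most $\hat{\mathbf{E}}[|X_1|]/n$ (one step of the recursion combined with the $1$-Lipschitz property in $x$), so the oscillation of $v_n(x-y,\cdot)$ over any window of length $s<\varepsilon$ is at most $(\varepsilon+1/n)\hat{\mathbf{E}}[|X_1|]$. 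Using $\int\partial_{ss}\zeta_\varepsilon(y,s)\,ds=0$ to rewrite
$$\partial_{tt}v_{n,\varepsilon}(x,t)=\int\bigl[v_n(x-y,t-s)-v_n(x-y,t)\bigr]\partial_{ss}\zeta_\varepsilon(y,s)\,dy\,ds$$
and invoking $\|\partial_{ss}\zeta_\varepsilon\|_{L^1}\le C_\zeta/\varepsilon^2$ yields $\|\partial_{tt}v_{n,\varepsilon}\|_\infty \le C_\zeta\,\hat{\mathbf{E}}[|X_1|](1/\varepsilon+1/(n\varepsilon^2))$. Absorbing the $1/(n\varepsilon^2)$ contribution into $1/\varepsilon$ in the regime $n\varepsilon\ge1$ and assembling all bounds produces the claimed inequality $f_{n,\varepsilon}\ge -\frac{C_\zeta}{n\varepsilon}\bigl(\tfrac12\hat{\mathbf{E}}[|X_1|^2]+3\hat{\mathbf{E}}[|X_1|]\bigr)$.
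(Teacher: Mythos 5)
Your proposal is correct and follows essentially the same route as the paper: mollify the one-step dynamic programming identity $v_n(x,t+1/n)=\hat{\mathbf{E}}[v_n(x+X_1/n,t)]$ using sublinearity to preserve the inequality, Taylor expand in $x$ and $t$, and bound the remainders by second derivatives of $v_{n,\varepsilon}$, which are in turn controlled by $C_\zeta/\varepsilon$ times the Lipschitz constants of $v_n$. The only cosmetic differences are that you expand forward in time and collect the temporal error in a single $\|\partial_{tt}v_{n,\varepsilon}\|_\infty$ term (making explicit the $n\varepsilon\ge 1$ normalization that the paper leaves implicit in its bound on $K_{n,\varepsilon}$), whereas the paper expands backward and splits the error into the terms $I_{n,\varepsilon}$, $J_{n,\varepsilon}$, $K_{n,\varepsilon}$.
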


\begin {proof} First,  it is easily seen that, for $x,y \in \mathbb{R}$ and $s,t \in T_n$,
\[|v_n(x,t)-v_n(y,t)|\le |x-y|, \ |v_n(x,t)-v_n(x,s)|\le \hat{\mathbf{E}}[|X_1|]|t-s|.\]
By the definition of the extension of $v_n$,  for $x,y\in \mathbb{R}$
and $s,t\in \mathbb{R}_+$ we have
\begin {eqnarray}\label {Est-vnx}\left| v _ { n } (x,t) - v _ { n } (x,s) \right| &\leq& \hat{\mathbf{E}}[|X_1|] (| t - s |  + n ^ { -1 }),\\
\label {Est-vnt}\left| v _ { n } (x,t) - v _ { n } (y,t) \right| &\leq& | x - y |.
\end {eqnarray}
It follows from the definition of $v_n$, for $t\ge \frac{1}{n}$,
$$v _ { n } (x,t) = \hat{\mathbf{E}}[ v _ { n } (x + X_1 / n , t - 1 / n )].$$ Then, for $t\ge \frac{1}{n}+\varepsilon$,
$$v _ { n, \varepsilon } (x,t) \geq \hat{\mathbf{E}}[ v _ { n,  \varepsilon }  (x + X_1 /n, t - 1 / n)].$$
Taylor's  formula gives \[v _ { n,  \varepsilon }  (x + \frac{X_1}{n}, t -\frac{1}{n})=v _ { n,  \varepsilon }  (x, t - \frac{1}{n})+\partial_xv _ { n,  \varepsilon }  (x, t)\frac{X_1}{n}+I_{ n,  \varepsilon}+J_{n,\varepsilon}\]
 with $ I_{n,\varepsilon} := \frac{1}{2}\partial_x^2v_{n,\varepsilon}(x+\frac{\theta X_1}{n}, t-\frac{1}{n})\frac{|X_1|^2}{n^2}, \ J_{n,\varepsilon}:=(\partial_xv _ { n,  \varepsilon }  (x, t -\frac{1}{n})-\partial_xv _ { n,  \varepsilon }  (x, t ))\frac{X_1}{n}$,
 \[v _ { n,\varepsilon } (x,t)-v _ { n,  \varepsilon }  (x, t - \frac{1}{n})=\partial_tv_{n,\varepsilon}(x,t)\frac{1}{n}+K_{n,\varepsilon}\]
 with $K_{n,\varepsilon} := \big(\partial_t v_{n,\varepsilon}(x,t-\frac{\theta}{n})-\partial_t v_{n,\varepsilon}(x,t)\big)\frac{1}{n}.$
  
  By (\ref{Est-vnx}), (\ref{Est-vnt}) and Lemma 2.3 in \cite {Kr},
  \begin {eqnarray*}
 \hat{\mathbf{E}}[|I_{ n,  \varepsilon}|] &\le& \frac{1}{2}C_\zeta\frac{1}{n^2\varepsilon}\hat{\mathbf{E}}[|X_1|^2]\\
 \hat{\mathbf{E}}[ |J_{n,\varepsilon}|]&\le& C_\zeta\frac{1}{n^2\varepsilon} \hat{\mathbf{E}}[ |X_1|]\\
 |K_{n,\varepsilon}|&\le& 2C_\zeta\frac{1}{n^2\varepsilon}\hat{\mathbf{E}}[|X_1|],
 \end {eqnarray*} 
  where  $C_{\zeta}$  is a constant depending only on $\zeta$.

Hence, for $t\ge \frac{1}{n}+\varepsilon$, \[\partial_t v_{n,\varepsilon}(x,t)-p(\partial_xv_{n,\varepsilon}(x,t))=:f_{n,\varepsilon}(x,t),\] 
where $f_{n,\varepsilon}(x,t)\ge-\frac{C_\zeta}{n\varepsilon}(\frac{1}{2}\hat{\mathbf{E}}[|X_1|^2]+3\hat{\mathbf{E}}[|X_1|])$.\end {proof}

\section {Convergence rate of LLN under sublinear expectations } 

\begin {theorem} Let $(X_k)_{k\ge 1}$ be a sequence of independent and identically distributed random variables  under a sublinear expectation $\hat{\mathbf{E}}$ with $\hat{\mathbf{E}}[|X_1|^{2}]<\infty$. Set $S_n=\frac{X_1+\cdots+X_n}{n}$, $\underline{\mu}=-\hat{\mathbf{E}}[-X_1]$, $\overline{\mu}=\hat{\mathbf{E}}[X_1]$. Then
\begin {eqnarray}
\sup_{|\phi|_{Lip}\le1}\bigg| \hat{\mathbf{E}}[\phi(S_n)]-\sup_{y\in[\underline{\mu}, \overline{\mu}]}\phi(y)\bigg|\le C n^{-1/2},
\end {eqnarray} where $C$ is a constant depending only on $\hat{\mathbf{E}}[|X_1|^2]$.

\end {theorem}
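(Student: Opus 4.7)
The plan is to combine Theorem \ref{Stein-Est} with the two smooth approximations from Section 4, applied with $\alpha=1$ to exploit the assumption $\hat{\mathbf{E}}[|X_1|^{2}]<\infty$. Since the viscosity solution $v(x,t)=\sup_{y\in[\underline{\mu},\overline{\mu}]}\phi(x+ty)$ of Equ. (\ref{pE}) is only Lipschitz, Theorem \ref{Stein-Est} cannot be applied to $v$ itself; instead I would substitute its mollifications $v_\varepsilon$ (from Lemma \ref{lem-gpE-App1}) and $v_{n,\varepsilon}$ (from Lemma \ref{lem-gpE-App2}), which are $C^\infty$ and satisfy Equ. (\ref{gpE}) with controlled sources. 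By standard mollifier calculus, $[\partial_xv_\varepsilon(\cdot,s)]_1$ and $[\partial_xv_{n,\varepsilon}(\cdot,s)]_1$ are bounded by $C_\zeta/\varepsilon$, and both $v_\varepsilon\to v$ and $v_{n,\varepsilon}\to v_n$ in sup norm at rate $O(\varepsilon)$ thanks to joint Lipschitz regularity.

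For the direction $\hat{\mathbf{E}}[\phi(S_n)]\le v(0,1)+Cn^{-1/2}$ I would apply (\ref{Est1-m}) of Theorem \ref{Stein-Est} to a suitably time-shifted copy of $v_\varepsilon$. Because $f_\varepsilon\ge0$ by Lemma \ref{lem-gpE-App1}, one has $\hat{\mathbf{E}}[-f_\varepsilon(\cdot)]\le0$, so the source term in (\ref{Est1-m}) contributes non-negatively and drops out in the favorable direction. The surviving H\"older term is bounded by $4n^{-1}[\partial_xv_\varepsilon]_1\,\hat{\mathbf{E}}[|X_1|^2]\le Cn^{-1}/\varepsilon$. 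Translating $v_\varepsilon(0,1)\to v(0,1)$ and $\hat{\mathbf{E}}[v_\varepsilon(S_n,\varepsilon)]\to\hat{\mathbf{E}}[\phi(S_n)]$ up to $O(\varepsilon)$ yields $\hat{\mathbf{E}}[\phi(S_n)]-v(0,1)\le C(\varepsilon+n^{-1}/\varepsilon)$; balancing at $\varepsilon=n^{-1/2}$ closes this side.

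For the reverse direction $v(0,1)\le\hat{\mathbf{E}}[\phi(S_n)]+Cn^{-1/2}$, the key input is the identity $v_n(0,1)=\hat{\mathbf{E}}[\phi(S_n)]$ obtained by iterating the defining recursion of $v_n$ together with independence, which makes $v_{n,\varepsilon}(0,1)$ an $O(\varepsilon)$-approximation of $\hat{\mathbf{E}}[\phi(S_n)]$. A direct application of Stein is degenerate here (both endpoints would be $\hat{\mathbf{E}}[\phi(S_n)]$ up to $O(\varepsilon)$), so the natural strategy is comparison: since $f_{n,\varepsilon}\ge -C/(n\varepsilon)$ by Lemma \ref{lem-gpE-App2}, the function $v_{n,\varepsilon}(x,t)+Ct/(n\varepsilon)$ (plus a small $O(\varepsilon)$ shift to dominate $\phi$ on the initial slice) is a classical supersolution of $\partial_tu-p(\partial_xu)=0$, and by the comparison principle for this convex first-order Hamilton--Jacobi equation it dominates the viscosity solution $v$. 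Evaluating at $(0,1)$ then yields $v(0,1)\le\hat{\mathbf{E}}[\phi(S_n)]+C(\varepsilon+1/(n\varepsilon))$, and $\varepsilon=n^{-1/2}$ again closes the inequality.

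The main technical obstacle is bookkeeping: both $v_\varepsilon$ and $v_{n,\varepsilon}$ satisfy their PDEs only for $t$ bounded away from $0$ (by $\varepsilon$ and by $\varepsilon+1/n$ respectively), so Theorem \ref{Stein-Est} and the comparison argument must be invoked after an appropriate time translation, with the resulting initial-value discrepancies absorbed into the $O(\varepsilon)$ mollification errors. The comparison step in the reverse direction also relies on the standard comparison principle for convex first-order Hamilton--Jacobi equations, which will either have to be cited or adapted to the present sublinear setting. Once these points are handled, the uniform choice $\varepsilon=n^{-1/2}$ simultaneously equates $\varepsilon$ with $n^{-1}/\varepsilon$ in the first direction and with $1/(n\varepsilon)$ in the second, producing the claimed $Cn^{-1/2}$ rate with $C$ depending only on $\hat{\mathbf{E}}[|X_1|^2]$.
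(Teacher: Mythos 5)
Your proposal is correct, and the upper--bound half (controlling $\hat{\mathbf{E}}[\phi(S_n)]-v(0,1)$ from above via $v_\varepsilon$, dropping the source term because $f_\varepsilon\ge 0$, paying $O(\varepsilon)$ for mollification and $O(n^{-1}\varepsilon^{-1})$ for the Stein term, then balancing at $\varepsilon=n^{-1/2}$) is exactly the paper's STEP 1. Where you genuinely diverge is the reverse inequality. The paper does \emph{not} use a comparison principle: it introduces an auxiliary i.i.d.\ sequence $\xi_1,\xi_2,\dots$ of $p$-maximally distributed random variables (for which the LLN is exact, so $\hat{\mathbf{E}}[\phi(S_n^\xi)]=v(0,1)$) and runs Theorem \ref{Stein-Est} a second time, now on $v_{n,\varepsilon}$ with the $\xi$-sequence in place of the $X$-sequence; this turns $v_{n,\varepsilon}(0,1+\varepsilon+\tfrac1n)\approx v_n(0,1)=\hat{\mathbf{E}}[\phi(S_n)]$ and $\hat{\mathbf{E}}[v_{n,\varepsilon}(S_n^\xi,\varepsilon+\tfrac1n)]\approx v(0,1)$ into the desired lower bound, at the cost of an extra Stein error $4n^{-1}m_p^2\|\partial_x^2v_{n,\varepsilon}\|_\infty\le C_\zeta m_p^2/(n\varepsilon)$ plus the source bound from Lemma \ref{lem-gpE-App2}. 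Your route instead observes that $f_{n,\varepsilon}\ge -C/(n\varepsilon)$ makes $v_{n,\varepsilon}(x,t+\varepsilon+\tfrac1n)+Ct/(n\varepsilon)+O(\varepsilon+\tfrac1n)$ a classical supersolution of $\partial_t u-p(\partial_x u)=0$ dominating $\phi$ at $t=0$, and invokes comparison for this Lipschitz first-order Hamilton--Jacobi equation to conclude $v(0,1)\le v_{n,\varepsilon}(0,1+\varepsilon+\tfrac1n)+C(\varepsilon+\tfrac1{n\varepsilon})$; combined with the same identity $v_n(0,1)=\hat{\mathbf{E}}[\phi(S_n)]$ (which the paper also uses and which does follow from iterating the recursion with independence), this closes the bound. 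Both arguments are valid and yield the same $n^{-1/2}$ rate. Your version buys a shorter second half that avoids the auxiliary maximal sequence and the second telescoping Stein argument, but it imports the viscosity-solution comparison principle from PDE theory, which the paper deliberately avoids so as to keep the entire proof inside its probabilistic Stein framework; you should be careful to actually state and cite that comparison result (bounded uniformly continuous sub/supersolutions, Lipschitz Hamiltonian), since it is the only external ingredient your argument rests on.
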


\begin {proof}
For $\phi$ satisfying $|\phi(x)-\phi(y)|\le |x-y|$, let $v$, $v_{\varepsilon}$, $v_n$, $v_{n, \varepsilon}$ be the functions defined in Section \ref {section-gpE}.

STEP 1. Estimating  $\hat{\mathbf{E}}[\phi(S_n)]-v(0, 1)$ from above.

It is easily seen that $|v(x,t)-v(y,t)|\le |x-y|$ and $|v(x,t)-v(x,s)|\le m_p|t-s|$ with $m_p=\max\{|\underline{\mu}|, |\overline{\mu}|\}$. 

By Theorem \ref {Stein-Est} and Lemma \ref {lem-gpE-App1} , we have
\begin {eqnarray*}v_\varepsilon(0, 1+\varepsilon)-\hat{\mathbf{E}}[v_\varepsilon(S_n, \varepsilon)]&\ge&-\frac{4}{n}\hat{\mathbf{E}}[|X_1|^2]\int_0^1\|\partial_x^2v_\varepsilon(\cdot, s+\varepsilon)\|_\infty ds\\
&\ge& -\frac{4C_\zeta}{n\varepsilon}\hat{\mathbf{E}}[|X_1|^2].
\end {eqnarray*}
Note that $|v_\varepsilon(0, 1+\varepsilon)-v(0,1)|\le |v_\varepsilon(0, 1+\varepsilon)-v_\varepsilon(0,1)|+|v_\varepsilon(0, 1)-v(0,1)|\le (2m_p+1)\varepsilon$, and
$|\hat{\mathbf{E}}[v_\varepsilon(S_n, \varepsilon)]-\phi(S_n)|\le |\hat{\mathbf{E}}[v_\varepsilon(S_n, \varepsilon)]-v(S_n, \varepsilon)]|+|\hat{\mathbf{E}}[v(S_n, \varepsilon)]-\phi(S_n)]|\le (2m_p+1)\varepsilon$. So
\begin {eqnarray*}v(0, 1)-\hat{\mathbf{E}}[\phi(S_n)]\ge -\frac{4C_\zeta}{n\varepsilon}\hat{\mathbf{E}}[|X_1|^2]-(4m_p+2)\varepsilon.
\end {eqnarray*}
Taking $\varepsilon=n^{-1/2}$, we have
\begin {eqnarray*}v(0, 1)-\hat{\mathbf{E}}[\phi(S_n)]\ge -4(C_\zeta\hat{\mathbf{E}}[|X_1|^2]+m_p+1/2) n^{-1/2}.
\end {eqnarray*}

STEP 2. Estimating  $\hat{\mathbf{E}}[\phi(S_n)]-v(0, 1)$ from below.

Let $\xi_1, \xi_2, \cdots$ be a sequence of independent and $p$-maximally distributed random variables under the sublinear expectation $\hat{\mathbf{E}}$, that is, $\hat{\mathbf{E}}[\varphi(\xi_1)]=\max_{y\in[\underline{\mu}, \overline{\mu}]}\varphi(y)$, for $\varphi\in C_{b, Lip}(\mathbb{R}).$

Set $S^\xi_n=\frac{\xi_1+\cdots+\xi_n}{n}$.

By Theorem \ref {Stein-Est} and Lemma \ref {lem-gpE-App2} , we have
\begin {eqnarray*}& &v_{n,\varepsilon}(0, 1+\varepsilon+\frac{1}{n})-\hat{\mathbf{E}}[v_{n,\varepsilon}(S^\xi_n, \varepsilon+\frac{1}{n})]\\
&\ge&-\frac{4}{n}\hat{\mathbf{E}}[|\xi_1|^2]\int_0^1\|\partial_x^2v_{n,\varepsilon}(\cdot, s+\varepsilon+\frac{1}{n})\|_\infty ds-\frac{C_\zeta}{n\varepsilon}(\frac{1}{2}\hat{\mathbf{E}}[|X_1|^2]+3\hat{\mathbf{E}}[|X_1|])\\
&\ge& -\frac{C_\zeta}{n\varepsilon}(4m_p^2+\frac{1}{2}\hat{\mathbf{E}}[|X_1|^2]+3\hat{\mathbf{E}}[|X_1|]).
\end {eqnarray*}

The last inequality follows from the fact that $\left| v _ { n } (x,t) - v _ { n } (y,t) \right| \leq | x - y |$, and consequently that
\[\|\partial_x^2v_{n,\varepsilon}(\cdot, s+\varepsilon+\frac{1}{n})\|_\infty\le \frac{C_\zeta}{\varepsilon}.\]
Note that
\begin {eqnarray*}& &|v_{n,\varepsilon}(0, 1+\varepsilon+\frac{1}{n})-\hat{\mathbf{E}}[\phi(S_n)]|\\
&=&|v_{n,\varepsilon}(0, 1+\varepsilon+\frac{1}{n})-v_n(0,1)|\\
&\le&|v_{n,\varepsilon}(0, 1+\varepsilon+\frac{1}{n})-v_n(0,1+\varepsilon+\frac{1}{n})|+|v_{n}(0, 1+\varepsilon+\frac{1}{n})-v_n(0,1)|\\
&\le&(1+2\hat{\mathbf{E}}[|X_1|])\varepsilon+3\hat{\mathbf{E}}[|X_1|]\frac{1}{n}.
\end {eqnarray*}
The last inequality follows from (\ref {Est-vnx}) and (\ref {Est-vnt}).

On the other hand,
\begin {eqnarray*}& &|\hat{\mathbf{E}}[v_{n,\varepsilon}(S^\xi_n, \varepsilon+\frac{1}{n})]-v(0,1)|\\
&=&|\hat{\mathbf{E}}[v_{n,\varepsilon}(S^\xi_n, \varepsilon+\frac{1}{n})]-\hat{\mathbf{E}}[\phi(S^\xi_n)]|\\
&\le&|\hat{\mathbf{E}}[v_{n,\varepsilon}(S^\xi_n, \varepsilon+\frac{1}{n})]-\hat{\mathbf{E}}[v_{n}(S^\xi_n, \varepsilon+\frac{1}{n})]|+|\hat{\mathbf{E}}[v_{n}(S^\xi_n, \varepsilon+\frac{1}{n})]-\hat{\mathbf{E}}[v_{n}(S^\xi_n, 0)]|\\
&\le&(1+2\hat{\mathbf{E}}[|X_1|])\varepsilon+3\hat{\mathbf{E}}[|X_1|]\frac{1}{n}.
\end {eqnarray*}
The last inequality also follows from (\ref {Est-vnx}) and (\ref {Est-vnt}).

So $\hat{\mathbf{E}}[\phi(S_n)]-v(0,1)\ge -\frac{C_\zeta}{n\varepsilon}(4m_p^2+\frac{1}{2}\hat{\mathbf{E}}[|X_1|^2]+3\hat{\mathbf{E}}[|X_1|])-2(1+2\hat{\mathbf{E}}[|X_1|])\varepsilon-6\hat{\mathbf{E}}[|X_1|]\frac{1}{n}$.
Taking $\varepsilon=n^{-1/2}$, we have
\begin {eqnarray*}\hat{\mathbf{E}}[\phi(S_n)-v(0,1)&\ge&- n^{-1/2}\bigg(C_\zeta(4m_p^2+\frac{1}{2}\hat{\mathbf{E}}[|X_1|^2]+3\hat{\mathbf{E}}[|X_1|])+2+10\hat{\mathbf{E}}[|X_1|]\bigg).
\end {eqnarray*} 
\end {proof}

\begin {remark} \cite {FPSS17} gave the following rate of convergence for Peng's LLN under sublinear expectations:
$$
\hat{\mathbf{E}}\left[d_{[\underline{\mu}, \overline{\mu}]}^{2}\left(S_{n}\right)\right] \leqslant \frac{2\left[\overline{\sigma}^{2}+(\overline{\mu}-\underline{\mu})^{2}\right]}{n},
$$ where $d_{[\underline{\mu}, \overline{\mu}]}(x):=\inf_{y\in[\underline{\mu}, \overline{\mu}]}|x-y|$. Roughly speaking, this estimate corresponds to the upper estimate of $\hat{\mathbf{E}}[\phi(S_n)]-\sup_{y\in[\underline{\mu}, \overline{y}]}\phi(y)$. Set $\phi(x):=d_{[\underline{\mu}, \overline{\mu}]}(x)$. Noting that $\sup_{y\in[\underline{\mu}, \overline{\mu}]}\phi(y)=0$, we have $\hat{\mathbf{E}}[d_{[\underline{\mu}, \overline{\mu}]}(S_n)]=\hat{\mathbf{E}}[\phi(S_n)]-\sup_{y\in[\underline{\mu}, \overline{\mu}]}\phi(y)$.
\end {remark}

\section {Stein type characterization for maximal distribution $\mathcal{M}_p$}

In this section, we give a Stein type characterization for the maximal distribution $\mathcal{M}_p$. Under a sublinear expectation, one can not derive from such  characterization the Stein equation like the linear case, but this charcterization is still instructive on how to establish Stein's method under sublinear expectations. 
\begin {proposition} Let $\mathcal{N}[\phi]=\sup_{\mu\in \Theta}\mu[\phi]$ be a sublinear expectation on $C_{b, Lip}(\mathbb{R})$. Then $\mathcal{N}[\phi]=\sup_{y\in[\underline{\mu}, \overline{\mu}]}\phi(y)$ if and only if for any $\varphi\in C_b^1(\mathbb{R})$ and $\mu\in \Theta_\varphi$
\[E_\mu[p(\varphi'(\xi))-\xi\varphi'(\xi)]=0,\]
where $p(a)=\sup_{y\in[\underline{\mu},\overline{\mu}]}(y a)$, $a\in\mathbb{R}$.
\end {proposition}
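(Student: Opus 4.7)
For the necessity direction ($\Rightarrow$), suppose $\mathcal{N}[\phi]=\sup_{y\in[\underline{\mu},\overline{\mu}]}\phi(y)$ for every $\phi\in C_{b,Lip}(\mathbb{R})$. A standard duality argument shows the closed convex hull of $\Theta$ equals the set of probability measures on $[\underline{\mu},\overline{\mu}]$; in particular every $\mu\in\Theta$ is supported in $[\underline{\mu},\overline{\mu}]$. Fix $\varphi\in C_b^1(\mathbb{R})$ and $\mu\in\Theta_\varphi$: the identity $E_\mu[\varphi]=\max_{y\in[\underline{\mu},\overline{\mu}]}\varphi(y)$ forces $\mu$ to concentrate on the argmax set $M\subseteq[\underline{\mu},\overline{\mu}]$. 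A short case analysis---at an interior maximizer $\varphi'(y)=0$, while at $y=\overline{\mu}$ (resp.\ $y=\underline{\mu}$) one-sided optimality gives $\varphi'(\overline{\mu})\ge 0$ (resp.\ $\varphi'(\underline{\mu})\le 0$)---shows $p(\varphi'(y))=y\varphi'(y)$ at every $y\in M$, using the explicit formula $p(a)=\max(\overline{\mu}a,\underline{\mu}a)$. Integrating against $\mu$ yields $E_\mu[p(\varphi'(\xi))-\xi\varphi'(\xi)]=0$.

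For the sufficiency direction ($\Leftarrow$), fix $\phi\in C_{b,Lip}(\mathbb{R})$ and set $v(x,t):=\sup_{y\in[\underline{\mu},\overline{\mu}]}\phi(x+ty)$, the Lipschitz viscosity solution of (\ref{pE}). To obtain $\mathcal{N}[\phi]\le v(0,1)$, I apply Lemma \ref{SteinEquation-LLN} to the smooth convolution $v_\varepsilon=v*\zeta_\varepsilon$, which by Lemma \ref{lem-gpE-App1} satisfies (\ref{gpE-App1}) with $f_\varepsilon\ge 0$. Since $v_\varepsilon$ is $C^\infty$, each slice $\phi_s(x):=v_\varepsilon((1-s)x,s+\varepsilon)$ lies in $C_b^1(\mathbb{R})$, so the hypothesis forces $E_{\mu_s}[\mathcal{L}_p\phi_s(\xi)]=0$ for every $\mu_s\in\Theta_{\phi_s}$. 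Lemma \ref{SteinEquation-LLN} then collapses to
\[v_\varepsilon(0,1+\varepsilon)-\mathcal{N}[v_\varepsilon(\cdot,\varepsilon)]=\int_0^1 E_{\mu_s}[f_\varepsilon((1-s)\xi,s+\varepsilon)]\,ds\ge 0,\]
and letting $\varepsilon\to 0$ (with $v_\varepsilon\to v$ uniformly on compacts) gives $v(0,1)\ge\mathcal{N}[\phi]$.

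For the matching bound $\mathcal{N}[\phi]\ge v(0,1)$---after approximating $\phi$ by smooth functions---I would show that the function $V(x,t):=\mathcal{N}[\phi(x+t\xi)]$ is a viscosity super-solution of (\ref{pE}). At any differentiability point $(x_0,t_0)$ the envelope formulas of Hu-Peng-Song give $\partial_x V(x_0,t_0)=E_{\mu_0}[\phi'(x_0+t_0\xi)]$ and $\partial_t V(x_0,t_0)=E_{\mu_0}[\xi\phi'(x_0+t_0\xi)]$ for every $\mu_0\in\Theta_{\phi(x_0+t_0\cdot)}$. Applying the hypothesis to $\varphi(y):=\phi(x_0+t_0 y)$ and using the positive homogeneity $p(t_0 a)=t_0 p(a)$ for $t_0\ge 0$ converts this into $\partial_t V=E_{\mu_0}[p(\phi'(x_0+t_0\xi))]$, whence Jensen's inequality for the convex function $p$ yields
\[\partial_t V-p(\partial_x V)=E_{\mu_0}[p(\phi'(x_0+t_0\xi))]-p\bigl(E_{\mu_0}[\phi'(x_0+t_0\xi)]\bigr)\ge 0.\]
The comparison principle for first-order Hamilton-Jacobi equations then forces $V\ge v$, and in particular $\mathcal{N}[\phi]=V(0,1)\ge v(0,1)$; combining with the upper bound gives equality. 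The main obstacle will be making the super-solution verification rigorous at points where $V$ fails to be differentiable, which requires careful analysis of the viscosity sub-differential $D^-V$ together with weak compactness of $\Theta_{\phi(x_0+t_0\cdot)}$ to control the envelope formulas in the limit.
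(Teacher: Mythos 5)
Your necessity argument and the first half of your sufficiency argument (the bound $\mathcal{N}[\phi]\le\sup_{y\in[\underline{\mu},\overline{\mu}]}\phi(y)$, obtained by feeding the mollified solution $v_\varepsilon$ of Lemma \ref{lem-gpE-App1} into Lemma \ref{SteinEquation-LLN} and using the hypothesis to kill the $\mathcal{L}_p$ term) coincide with the paper's proof. Your necessity step is in fact written out more carefully than the paper's, which only records the identity $p(\varphi'(x_\varphi))=x_\varphi\varphi'(x_\varphi)$ at a single maximizer and leaves implicit that any $\mu\in\Theta_\varphi$ concentrates on the argmax set.

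Where you genuinely diverge is the reverse inequality $\mathcal{N}[\phi]\ge\sup_{y}\phi(y)$, and there your proposal has a real gap. The paper avoids viscosity theory entirely: having deduced from the first inequality that every $\mu\in\Theta$ is supported in $[\underline{\mu},\overline{\mu}]$, it tests the hypothesis on the bumps $\varphi_y(x)=\exp\{-(x-y)^2/2\}$ for $y\in(\underline{\mu},\overline{\mu})$; since $p(\varphi_y'(x))-x\varphi_y'(x)$ is nonnegative on $[\underline{\mu},\overline{\mu}]$ and strictly positive there except at $x=y$, any $\mu\in\Theta_{\varphi_y}$ must equal $\delta_y$, so $\delta_y\in\Theta$ for every interior $y$ and the lower bound is immediate. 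Your route instead needs $V(x,t)=\mathcal{N}[\phi(x+t\xi)]$ to be a viscosity supersolution of (\ref{pE}), but the envelope/Jensen computation only yields $\partial_tV-p(\partial_xV)\ge0$ at points where $V$ happens to be differentiable, and an a.e.\ pointwise supersolution is not automatically a viscosity supersolution; this is precisely the step you defer. It can be repaired --- the Hamiltonian $(q,a)\mapsto q-p(a)$ is concave, so mollifying $V$ and applying Jensen's inequality to the convolution (the same device as in Lemma \ref{lem-gpE-App1}) produces classical supersolutions converging locally uniformly to $V$, and stability of viscosity supersolutions, followed by comparison with $v$, finishes the argument --- but as submitted the hardest step is unproved, and the extra machinery (comparison principle, $C_b^1$ approximation of $\phi$, envelope formulas at a.e.\ point) buys nothing over the paper's elementary test-function trick.
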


\begin {proof}

(Necessity )

For $\varphi\in C_b^1(\mathbb{R})$, assume $\varphi(x_\varphi)=\sup_{y\in[\underline{\mu}, \overline{\mu}]}\phi(y)$ for some $x_\varphi$. Clearly we have $p(\varphi'(x_\varphi))-x_\varphi\varphi'(x_\varphi)=0$.

(Sufficiency)

For $\phi\in C_{b,Lip}(\mathbb{R})$, let $v_\varepsilon$ be the function defined in Section \ref {section-gpE}. By Lemma \ref {SteinEquation-LLN} and Lemma \ref {lem-gpE-App1} , it follows from the assumption that
\[v_\varepsilon(0,1+\varepsilon)-\mathcal{N}[v_\varepsilon(\cdot, \varepsilon)]\ge0.\]
Let $\varepsilon$ go to zero, we get
\begin {eqnarray*}\mathcal{N}[\phi]\le\sup_{y\in[\underline{\mu}, \ \overline{\mu}]}\phi(y).
\end {eqnarray*}
This implies that 
\begin {eqnarray} \label {SteinChara} \mu([\underline{\mu}, \overline{\mu}])=1, \  \textit{for any} \ \mu\in\Theta.
\end {eqnarray} Now let us prove the reversed inequality. For any $y\in(\underline{\mu}, \overline{\mu})$, set $\varphi_y(x):=\exp\{-\frac{(x-y)^2}{2}\}$.
Note that $\varphi_y$ belongs to $C_b^1(\mathbb{R})$ with $\varphi_y'(x)>0$ on $(-\infty, y)$ and $\varphi_y'(x)<0$ on $(y, +\infty)$. Then, for  $\mu\in\Theta_{\varphi_y}$, it follows from the assumption and (\ref{SteinChara}) that
\begin{eqnarray}
p(\varphi_y'(x))=x\varphi_y'(x), \ \mu-a.s..
\end{eqnarray}
Note that, on $(-\infty, y)$,
\[p(\varphi_y'(x))-x\varphi_y'(x)=(\overline {\mu}-x)\varphi_y'(x)>0,\]
and, on $(y, +\infty)$,
\[p(\varphi_y'(x))-x\varphi_y'(x)=(\underline {\mu}-x)\varphi_y'(x)>0.\]
Hence, we have $\mu=\delta_y\in\Theta.$
\end {proof}

\section*{Acknowledgements}

The author is financially supported by NCMIS; NSFCs (No. 11871458 \& No. 11688101); and
Key Research Program of Frontier Sciences, CAS (No. QYZDB-SSW-SYS017).


\renewcommand{\refname}{\large References}{\normalsize \ }

\end{document}